\documentclass[12pt]{amsart}
\usepackage{amsmath,amssymb,
amsbsy,amsfonts,latexsym,amsopn,
amstext,cite,amsxtra,euscript,amscd,bm,mathabx}
\usepackage{url}

\usepackage[colorlinks,linkcolor=blue,
anchorcolor=blue,citecolor=blue,backref=page]{hyperref}
\usepackage{color}
\usepackage{graphics,epsfig}
\usepackage{graphicx}
\usepackage{float}
\usepackage{epstopdf}
\hypersetup{breaklinks=true}

\usepackage[norefs,nocites]{refcheck}

\newtheorem{theorem}{Theorem}[section]
\newtheorem{lemma}[theorem]{Lemma}

\newcommand{\R}{\mathbb{R}}

\newcommand{\Z}{\mathbb{Z}}

\newcommand{\wA}{\widehat{A}}

\newcommand{\E}{\mathsf{E}}
\newcommand{\D}{(\delta, \sigma)}

\title{Discretized sum-product for large sets}
\author[C. Chen]{Changhao Chen}

\address{Department of Pure Mathematics, University of New South Wales,
Sydney, NSW 2052, Australia}
\email{changhao.chenm@gmail.com}

\keywords{$\D$-sets, sum set estimates, Fourier transform}
\subjclass[2010]{05B99}

\begin{document}

\begin{abstract} 
Let $A\subset [1, 2]$ be a $\D$-set with measure $|A|=\delta^{1-\sigma}$ in the sense of Katz and Tao. For $\sigma\in (1/2, 1)$ we show that 
$$
|A+A|+|AA|\gtrapprox \delta^{-c}|A|, 
$$
for $c=\frac{(1-\sigma)(2\sigma-1)}{6\sigma+4}$.  This improves  the bound of  Guth, Katz, and Zahl for large $\sigma$. 
\end{abstract}

\maketitle

\section{Introductions}

Erd\H{o}s-Volkmann \cite{EV}  showed that for any $\sigma\in [0, 1]$ there exists a  subgroup of reals  with Hausdorff dimension $\sigma$, and they conjectured that this property does not hold for subring of reals.  Precisely the Erd\H{o}s-Volkmann ring conjecture claims that there does not exists a subring of reals  with Hausdorff dimension strictly between zero and one.  Edgar and Miller \cite{EM}  first proved this ring conjecture via the orthogonal projections of fractal sets. A slightly later Bourgain \cite{Bourgain2003} independently proved the Erd\H{o}s-Volkmann ring conjecture via the discretized ring conjecture (discretized sum-product) of Katz and Tao \cite{KT}. 

Discretized sum-product also has many other applications.  For instance it is closely related to Falconer distance sets problem and the dimension of  Furstenburg sets, see Katz and Tao \cite{KT} for more details.   Bourgain \cite{Bourgain2010} showed a different approach for the discretized sum-product, and given applications in the projections of fractal sets and Fourier analysis. For the applications  of the discretized sets to projections of fractal sets see  He \cite{He}, Orponen \cite{Orponen}  and reference therein. For the applications of discretized sum-product to the Fourier decay of measures see Li \cite{Li}.

We note that  Bourgain \cite{Bourgain2003, Bourgain2010}  does not produce an explicit bound. Recently Guth, Katz, and Zahl \cite{GKZ} given a short proof of the discretized sum-product theorem, and they showed  an explicit bound for the discretized sum-product theorem. They \cite{GKZ}  used some  ideas from  Garaev \cite{G}  and  applied some discretized version of arguments from additive combinatorics.   

We show some notation of Katz and Tao \cite{KT} first. Let $\varepsilon, \delta$ be small and positive parameters.
We use $f\lessapprox g$ to denote $|f|\le C_\epsilon \delta^{-C \epsilon} |g|$, $f\gtrapprox g$ if $g\lessapprox f$, and 
$f \approx g$ if $f\lessapprox g$ and $g \gtrapprox f$. We say that a subset $A\subset \R$ is called $\delta$-discretized if $A$ is the union of intervals of lengths $\approx \delta$. For a positive constant $M$  we say that a subset  $A\subset (-M, M)$  is a $\D$-set if it is $\delta$-discretized, and one has 
$$
|A \cap B(x, r)|\lessapprox \delta(r/\delta)^{\sigma} 
$$
for all $\delta \le r<1$ and $x\in \R$. 

We note that a $\D$-set here may have much smaller measure than $\delta^{1-\delta}$. However we ask that any $\D$-set has positive measure to avoid to be an empty set. 

We remark that the $(\delta, \sigma)$-set can be considered as the discrete approximation of  set in $\R$ at scale $\delta$. Suppose that  $F$ is a compact subset of $\R$.  For each $k\in \Z$ let 
$$
I_{k, \delta}=[k\delta, (k+1)\delta],
$$
and $F_\delta$ be the union of the interval $I_{k, \delta}$ which intersects $F$, that is
\begin{equation*}
F_\delta=\bigcup_{ k\in \Z, 
F \cap I_{k, \delta}\neq \emptyset } I_{k, \delta}.
\end{equation*}
In many cases,  the set $F_\delta$ is a $(\delta, \sigma)$-set for some $0\le \sigma\le 1$. Here the parameter $\sigma$ is often related to the ``fractal dimension" of $F$.  For instance if $F$ is the classical Cantor ternary set, then for any $0<\delta<1$ the set $F_\delta$ is a $(\delta, \log2/ \log3)$-set. Indeed this follows from  Falconer \cite[Chapter 3]{Falconer}, which shows that the box dimension of the Cantor  set is $\log 2/\log 3$. We remark that  there are various dimensions in fractal geometry, we refer to \cite{Falconer} for more details. 

The above argument shows that the  $(\delta, \sigma)$-set in $\R^d$ is also important for understanding the structure of set in $\R^d$. However in this project we consider  $(\delta, \sigma)$-set on the line $\R$ only.

Let $A, B \subseteq \R$. The sum sets of $A$ and $B$ is defined as 
$$
A+B=\{a+b: a \in A, b\in B\},
$$
and similarly the product sets of $A$ and $B$ is defined as 
$$
AB=\{ab: a\in A, b \in B\}.
$$

Using the above notation, Bourgain's discretized sum product theorem claims that  
for a $\D$-set $A\subset [1, 2]$ with $0<\sigma<1$ and the measure $|A|=\delta^{1-\sigma}$, there exists a constant $c=c(\sigma)>0$ such that 
$$
|A+A|+|AA|\gtrapprox \delta^{-c} |A|.
$$
Under the same condition, Guth,  Katz, and  Zahl \cite{GKZ}  proved that  for any $c<\frac{\sigma(1-\sigma)}{4(7+3\sigma)}$ one has 
$$
|A+A|+|AA|\gtrapprox \delta^{-c} |A|.
$$
By adapting the arguments of Garaev \cite{G}, Guth,  Katz, and  Zahl \cite{GKZ}, and the bilinear bound of Bourgain \cite[Theorem 7]{Bourgain2010}, we obtain the following. 

\begin{theorem}
\label{thm:main}
Let $\sigma\in (1/2, 1)$ and $A\subset [1, 2]$ be a $\D$-set with measure $|A|=\delta^{1-\sigma}$. Then
$$
|A+A|+|AA|\gtrapprox \delta^{-\frac{(1-\sigma)(2\sigma-1)}{6\sigma+4}} |A|.
$$ 
\end{theorem}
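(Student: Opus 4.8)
The plan is to run an additive- and multiplicative-energy argument in the spirit of Garaev \cite{G}, with Bourgain's bilinear estimate \cite[Theorem 7]{Bourgain2010} supplying the decisive power gain. Write $N=|A|=\delta^{1-\sigma}$, and suppose we are in the hard case where both $|A+A|\lessapprox KN$ and $|AA|\lessapprox KN$ for some $K=\delta^{-c}$ (if either set is already $\gtrapprox \delta^{-c}N$ there is nothing to prove). At scale $\delta$ define the additive energy $\E_+(A)$ and the multiplicative energy $\E_\times(A)$ as the measures of the sets of quadruples $(a,b,c,d)\in A^4$ with $a+b\approx c+d$ and with $ab\approx cd$ respectively. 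A Cauchy--Schwarz applied to the representation functions at scale $\delta$ gives $|A+A|\gtrapprox N^4/\E_+(A)$ and $|AA|\gtrapprox N^4/\E_\times(A)$, so the two smallness hypotheses force $\E_+(A)\gtrapprox N^3/K$ and $\E_\times(A)\gtrapprox N^3/K$. Thus the task reduces to showing that a $\D$-set with $\sigma>1/2$ cannot simultaneously carry near-maximal additive and multiplicative energy, with the loss quantified in $K$.

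The heart is Garaev's intertwining of the two energies. First I would pigeonhole to a refined subset $A'\subseteq A$, still essentially a $\D$-set after the $\lessapprox$-losses, on which the multiplicative representation function is roughly constant on a popular level set; on $A'$ the bound $\E_\times(A)\gtrapprox N^3/K$ produces a robust family of multiplicative quadruples $ab\approx cd$. Rewriting $ab\approx cd$ as $a/c\approx d/b$ repackages this family as incidences with a dilated copy of $A$, and a second Cauchy--Schwarz couples these dilations to the additive structure of $A+A$. Carrying out Garaev's chain of inequalities in this discretized form collapses the two-sided hypothesis into a single bilinear quantity $Q$ measuring, for typical scales $t$, the overlap of $A$ with $tA$ summed against the additive structure; the trivial bound for $Q$ is what one gets assuming no cancellation.

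The final input is Bourgain's bilinear bound \cite[Theorem 7]{Bourgain2010}, which is exactly tailored to estimate such an overlap for $\D$-sets and yields a gain of a power $\delta^{\kappa}$ over the trivial bound, with $\kappa$ proportional to $2\sigma-1$; this is the step that requires $\sigma\in(1/2,1)$, since for $\sigma\le 1/2$ the estimate degenerates and no gain survives. Comparing the lower bound for $Q$ coming from $\E_+(A),\E_\times(A)\gtrapprox N^3/K$ against this upper bound produces an inequality of the shape $K^{\alpha}\gtrapprox \delta^{-\beta(2\sigma-1)}$, and balancing the exponents $\alpha,\beta$ --- which are dictated by how many Cauchy--Schwarz steps each of $|A+A|$, $|AA|$ enters, and by the dimensional count $N=\delta^{1-\sigma}$ --- forces $c$ up to the stated value $c=\frac{(1-\sigma)(2\sigma-1)}{6\sigma+4}$, with the denominator $6\sigma+4$ emerging from this bookkeeping.

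I expect the main obstacle to be the discretization itself rather than the combinatorial skeleton: transcribing Garaev's exact counting identities into measures of $\delta$-quadruples, and keeping the refined sets genuinely $\D$-sets (so that Bourgain's theorem applies) through each pigeonholing, while ensuring the accumulated $\delta^{-C\epsilon}$ factors stay inside the $\lessapprox$ notation. A secondary difficulty is verifying that the configuration fed into \cite[Theorem 7]{Bourgain2010} meets its non-concentration hypotheses after the reparametrisation $ab\approx cd\mapsto a/c\approx d/b$, and then tracking the exponents carefully enough to land on $6\sigma+4$ rather than a weaker constant.
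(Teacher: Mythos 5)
Your overall skeleton --- Garaev-style pigeonholing plus Bourgain's bilinear bound, balanced to land on $c=\frac{(1-\sigma)(2\sigma-1)}{6\sigma+4}$ --- is the same as the paper's, but your logical reduction contains a genuine flaw that would sink the proof as you describe it. You claim that ``the task reduces to showing that a $\D$-set with $\sigma>1/2$ cannot simultaneously carry near-maximal additive and multiplicative energy.'' That statement is false. Take $A=B\cup C$, where $B$ is a union of $\tfrac12\delta^{-\sigma}$ $\delta$-intervals centred at an arithmetic progression of spacing $\approx\delta^{\sigma}$ in $[1,2]$, and $C$ is the analogous union along a geometric progression; then $A$ is a $\D$-set of measure $\approx\delta^{1-\sigma}$, yet $\E_+(A)\ge\E_+(B)\gtrsim|A|^{3}$ and $\E_\times(A)\ge\E_\times(C)\gtrsim|A|^{3}$, both near-maximal (this is exactly why energy formulations of sum-product require Balog--Szemer\'edi--Gowers to pass to structured subsets). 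Consequently, any argument whose final contradiction takes only $\E_+(A),\E_\times(A)\gtrapprox N^{3}/K$ as input --- which is what your last paragraph proposes when it compares ``the lower bound for $Q$ coming from $\E_+(A),\E_\times(A)$'' against Bourgain's upper bound --- would equally ``refute'' the set above, for which no contradiction exists.

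The repair, which is how the paper (following Garaev) actually proceeds, is to use the smallness of the sets themselves rather than the largeness of the energies, and to note that the additive energy that ultimately matters is $E(A,tA)$ for dilates $t$, not $\E_+(A)$. Concretely: smallness of $|AA|$ (or equivalently your multiplicative-quadruple count --- this half of your plan is sound) produces, after pigeonholing, a popular $a_{j_0}$ and $\gtrapprox\delta^{-\sigma}/K$ indices $i$ with $|a_iA\cap a_{j_0}A|\gtrapprox|A|/K$; then the Ruzsa triangle inequality with pivot $a_iA\cap a_{j_0}A$ --- a step that needs $|A+A|$ itself to be small, and for which large $\E_+(A)$ is no substitute --- gives $|A+tA|\lessapprox K^{3}|A|$ for every $t$ in a $\D$-set $T$ of ratios $a_i/a_{j_0}$ with $|T|\gtrapprox|A|/K$. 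Bourgain's bilinear bound then yields an upper bound on $\int_T E(A,tA)\,dt$ (via a frequency decomposition into $|\xi|\le L$, $L\le|\xi|\le\delta^{-1}$, $|\xi|\ge\delta^{-1}$ and optimization in $L$, which your sketch leaves implicit), hence some $t_0\in T$ with $E(A,t_0A)$ small; Cauchy--Schwarz gives the opposing lower bound $|A|^{4}\le E(A,t_0A)|A+t_0A|\lessapprox E(A,t_0A)K^{3}|A|$, and comparing the two forces $K\gtrapprox\delta^{-\frac{(1-\sigma)(2\sigma-1)}{6\sigma+4}}$. So the multiplicative half of your reduction can be kept, but the additive half must be run through the sumset, not through the energy.
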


Note that for any $1/2<\sigma<1$, Theorem \ref{thm:main} gives a non-trivial lower bound. Furthermore  for  $\sigma>\frac{\sqrt{226}-10}{9}=0.5703\ldots$, Theorem \ref{thm:main} improves the bound of Guth,  Katz, and  Zahl  \cite{GKZ}.

\section{Preliminaries}

We use $\#S$ to denote the cardinality of set $S$.  Let $X, Y, Z\subset \R$ be finite sets with $Z\neq \emptyset$, 
then the Ruzsa triangle inequality claims that 
$$
\#(X+Y)\le \frac{\#(X+Z) \#(Z+Y)}{\#Z}.
$$
See \cite[Chapter 2]{TV} for a proof and  many other useful sum sets estimates. 

We need the following well known discretized version of Ruzsa triangle inequality. Our proof is based on 
Guth,  Katz, and  Zahl \cite[Proof of Corollary 2.3]{GKZ},  Orponen \cite[Remark 4.40]{Orponen}. 
For many other  discretized version of sum sets estimates see He \cite{Hethesis},  Tao \cite{Taoblog}. 

We  show a geometric observation first. 
Let $S\subset \R$ be the union of  disjoint intervals with length $\gtrapprox \delta$.  Then for all $0<c<10$ we have
\begin{equation}
\label{eq:nnn}
|S+B(0, c\delta)|\lessapprox |S|.
\end{equation}
Here  we can change the parameter $10$ to any other fixed positive constant.

\begin{lemma}
\label{lem:Ruzsa}
Let $A, B, C \subset \R$ be  $\delta$-discretized sets.  Then 
$$
|A+B|\lessapprox \frac{|A+C||C+B|}{|C|}.
$$
\end{lemma}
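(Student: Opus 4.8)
The plan is to discretize the proof of the classical Ruzsa triangle inequality, replacing cardinalities of finite sets by Lebesgue measures of $\delta$-discretized sets, and to control the discretization errors using the geometric observation \eqref{eq:nnn}. The combinatorial heart of Ruzsa's inequality is the injection argument: for each element $z \in C$, the map $a+b \mapsto (a+z, z+b)$ embeds $A+B$ into $(A+C)\times(C+B)$, and averaging over $z$ yields the bound. In the continuous setting I would instead set up an integral/covering version of this argument.

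Let me describe the key steps. First I would fix the relevant scale and reduce to counting $\delta$-intervals: write $N_S$ for the number of $\delta$-intervals needed to cover a $\delta$-discretized set $S$, so that $|S| \approx \delta\, N_S$ up to the constant in \eqref{eq:nnn}. The goal then becomes the purely combinatorial estimate $N_{A+B} \lessapprox N_{A+C}\,N_{C+B}/N_C$, after which multiplying through by $\delta$ recovers the measure statement. Second, I would choose a maximal $\delta$-separated subset of $C$ and, for each point $a+b$ in (a $\delta$-net of) $A+B$, pick a witness $c$ from this net lying close to $C$; the pair $(a+c,\, c+b)$ then lands, up to an $O(\delta)$ error, inside $(A+C)\times(C+B)$. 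The thickening \eqref{eq:nnn} guarantees that absorbing these $O(\delta)$ errors only costs a $\lessapprox$-factor, since $A+C$, $C+B$, and $C$ are all unions of intervals of length $\gtrapprox \delta$ after the initial discretization. Third, a counting argument (each point of $A+B$ receiving a distinct witnessed pair for a fixed $c$, then summing over the $\approx N_C$ choices of $c$) reproduces the division by $N_C$.

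The main obstacle I expect is the bookkeeping of the discretization errors: in the discrete world the map $a+b\mapsto(a+z,z+b)$ is genuinely injective for fixed $z$, but in the continuous world two nearby sums $a+b$ and $a'+b'$ can collide within a single $\delta$-interval, and one must verify that the number of such collisions is bounded so that the injectivity is preserved \emph{up to the $\lessapprox$ tolerance}. This is precisely where \eqref{eq:nnn} and the definition of $\lessapprox$ (which swallows factors of $C_\varepsilon \delta^{-C\varepsilon}$) do the work, and it is the step where following the treatments of Guth--Katz--Zahl and Orponen most carefully pays off. Once the errors are shown to be absorbable, the algebra is routine and the stated inequality follows by translating the net-counting bound back into measures.
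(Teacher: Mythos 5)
Your first step---reducing to a counting statement for $\delta$-nets, with $|S|\approx\delta N_S$ and the errors absorbed by \eqref{eq:nnn}---is sound and is exactly what the paper does (it takes $S_\delta=(\delta/3)\Z\cap S$, proves $A+B\subset A_\delta+B_\delta+B(0,\delta)\subset A+B+B(0,2\delta)$, and concludes $|A+B|\approx\delta\,\#(A_\delta+B_\delta)$). The genuine gap is in what you call the combinatorial heart. The injection argument you describe proves the \emph{difference} form of Ruzsa's inequality: for $d=x_d-y_d$ the map $(d,z)\mapsto(x_d-z,\,z-y_d)$ is injective jointly in $(d,z)$, because the sum of the two coordinates recovers $d$ and then $z$. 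For the all-sums statement $\#(X+Y)\le\#(X+Z)\#(Z+Y)/\#Z$, which is what this lemma needs, the analogous map satisfies $(x_d+z)+(z+y_d)=d+2z$, so $d$ cannot be recovered from the image, and collisions between different pairs $(d,z)$ genuinely occur: with $X=Y=Z=\{0,1\}$ and the representations $0=0+0$, $2=1+1$, the pairs $(d,z)=(0,1)$ and $(d',z')=(2,0)$ both map to $(1,1)$. The injectivity for each \emph{fixed} $z$, which you correctly note, only gives the useless bound $\#(X+Y)\le\#X\,\#Y$; summing over the $\approx N_C$ choices of $c$ does not produce the division by $N_C$, precisely because the images for different $c$ overlap. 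So the collisions that kill the argument are not the $O(\delta)$ discretization collisions you flag (those are harmless, as you say), but exact collisions already present in the finite model.

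The sum version of the triangle inequality is true, but it is strictly deeper than the difference version: it follows from the Pl\"unnecke--Ruzsa inequality or from Ruzsa's covering lemma, not from the injection trick. This is why the paper, after the net reduction, simply \emph{cites} the finite sum version (from Tao--Vu, Chapter 2) as a black box applied to $A_\delta, B_\delta, C_\delta$, rather than reproving it. Your argument becomes correct with the same move: keep your step one, invoke the finite inequality for the nets, and transfer back using \eqref{eq:Adelta} and \eqref{eq:nnn}. As written, though, the proposal rests on an injection that does not exist.
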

\begin{proof}
With out losing general we may assume that each interval of $A, B$ and $C$ has length at least $\delta$. In the end we change the estimate $\ll$ to $\lessapprox$, and this does not change our result.

For any set $S\subset \R$ let $S_{\delta}=(\delta/3)\mathbb{Z} \cap S$.  For any $a\in A, b\in B$ there exists $a'\in A_\delta, b'\in B_\delta$ such that 
$$
|a+b-a'-b'|\le \delta.
$$
It follows that 
\begin{equation}
\label{eq:Adelta}
A+B\subset A_\delta+B_\delta+B(0, \delta)\subset A+B+B(0, 2\delta).
\end{equation}
Combining with \eqref{eq:nnn} we obtain
$$
|A+B|\ll \#(A_\delta+B_\delta) \delta\ll |A+B|.
$$
Applying Ruzsa triangle inequality to sets $A_\delta, B_\delta, C_\delta$ and applying \eqref{eq:Adelta} to $A+C$ and $C+B$ we obtain the result.
\end{proof}

\begin{lemma}
\label{lem:continuous}
Let $A\subset [0, 2]$ be a $\delta$-discretized set, and  the union of the intervals of $A$ are pairwise disjoint. Then for any $t\in [0.5, 2.1]$ and any $x\in [-\delta, \delta]$ we have 
$$
|A+tA|\lessapprox |A+(t+x)A|\lessapprox |A+tA|.
$$
\end{lemma}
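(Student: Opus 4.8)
The plan is to use the fact that replacing $t$ by $t+x$ moves each point of the sumset by at most $2\delta$, and then to absorb this $O(\delta)$ fattening by means of the geometric observation \eqref{eq:nnn}.

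First I would record the pointwise comparison: for $a,a'\in A$ one has
\begin{equation*}
a+(t+x)a' = (a+ta') + xa',
\end{equation*}
and since $a'\in[0,2]$ and $|x|\le\delta$ we get $|xa'|\le 2\delta$. Consequently every point of $A+(t+x)A$ lies within $2\delta$ of a point of $A+tA$ and vice versa, which gives the two inclusions
\begin{equation*}
A+(t+x)A \subset (A+tA)+B(0,2\delta), \qquad A+tA \subset (A+(t+x)A)+B(0,2\delta).
\end{equation*}

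Next I would verify that $A+tA$ and $A+(t+x)A$ meet the hypotheses of \eqref{eq:nnn}, namely that each is a union of pairwise disjoint intervals of length $\gtrapprox\delta$. Each is a finite union of intervals (a sumset of two finite unions of intervals), hence decomposes into its connected components, which are disjoint intervals. Since $t\ge 0.5$ and $t+x\ge 0.5-\delta$, every interval of $tA$ (resp.\ $(t+x)A$) has length $\approx t\delta\gtrapprox\delta$; summing it with an interval of $A$ of length $\approx\delta$ produces a basic sum-interval of length $\gtrapprox\delta$, and every connected component contains such an interval, hence has length $\gtrapprox\delta$.

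Finally, applying \eqref{eq:nnn} with $c=2$ to each of the two fattened sets and combining with the inclusions yields
\begin{equation*}
|A+(t+x)A| \le |(A+tA)+B(0,2\delta)| \lessapprox |A+tA|,
\end{equation*}
and, by the same argument with the roles of $t$ and $t+x$ interchanged (i.e.\ replacing $x$ by $-x$),
\begin{equation*}
|A+tA| \le |(A+(t+x)A)+B(0,2\delta)| \lessapprox |A+(t+x)A|,
\end{equation*}
which is the assertion. I expect the only delicate point to be the length lower bound on the connected components that makes \eqref{eq:nnn} applicable; this is precisely where the hypotheses that $t$ is bounded away from $0$ and that the intervals of $A$ are disjoint of length $\approx\delta$ are used. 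Everything else is a routine $O(\delta)$ comparison.
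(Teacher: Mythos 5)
Your proof is correct and takes essentially the same route as the paper's: the pointwise bound $|xa'|\le 2\delta$ yields the mutual inclusions up to a $B(0,2\delta)$-fattening, which is then absorbed using \eqref{eq:nnn}. Your additional verification that $A+tA$ and $A+(t+x)A$ are unions of disjoint intervals of length $\gtrapprox\delta$ (so that \eqref{eq:nnn} genuinely applies) is a detail the paper leaves implicit, and your argument for it is sound.
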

\begin{proof}
For any $t\in [0.5, 2], x\in [-\delta, \delta]$  and $a, b \in A$ we have
$$
|(a+tb) - (a+(t+x)b)|\le 2\delta,
$$
and hence 
$$
A+tA \subset A+(t+x)A +B(0, 2\delta)\subset A+tA+B(0, 4\delta).
$$
Combining with \eqref{eq:nnn} we finish the proof.
\end{proof}

 Let $f: \R\rightarrow \R$ be a function. The Fourier transform of the function $f$ at $\xi \in \R$ is defined as 
$$
\widehat{f}(\xi)=\int_{\R} e(-x\xi) f(x)dx,
$$
where throughout the paper we
denote $e(x)=e^{2\pi i x}$. Let $\mu$ be a measure on $\R$. The Fourier transform of the measure $\mu$ at  $\xi\in \R$ is defined as 
$$
\widehat{\mu}(\xi)=\int e(-x\xi)d \mu(x).
$$

For a subset $S\subset \R$ we will also use $S$ to denote the characteristic  function of $S$. 
Let $A, B\subset \R$ be two bounded sets. The convolution of $A$ and $B$ is defined as 
$$(A\ast B)(x)=\int_{\R} A(x-y)B(y)dy.$$ 
The (additive) energy of $A, B$ is defined as 
\begin{equation}
\label{eq:energy}
 E(A, B)=\int_{\R} (A\ast B)(x)^{2}dx= \int_{\R} |\widehat{A}(\xi)|^{2} |\widehat{B}(\xi)|^{2} d \xi.
\end{equation} 
The second equality holds by applying the Plancherel identity and convolution theorem. Clearly we have 
$$
\int_{\R}(A\ast B)(x) dx=|A||B|.
$$
By Cauchy-Schwarz inequality we obtain
\begin{equation}
\label{eq:energyandsumset}
(|A||B|)^{2}\le E(A, B)|A+B|.
\end{equation}

We will frequently use the  Plancherel identity for a set.  Precisely for a bounded subset $S\subset \R$ we have 
\begin{equation}
\label{eq:Plan}
\int_{\R} |\widehat{S}(\xi)|^{2}d \xi= |S|.
\end{equation}

We formulate the following version  of  Bourgain \cite[Theorem 7]{Bourgain2010}. Note that the interval $[0,4]$ is not essential, in fact Lemma~\ref{lem:anyinterval} holds for any bounded interval.

\begin{lemma}
\label{lem:anyinterval}
Let $\mu, \nu$ be probability measures on $[0, 4]$ such that for all $\delta <r\le 1 $ and all $x\in \R$,
\begin{equation*}
\mu(B(x, r))\lessapprox K_1 r^{\alpha}\text{ and } \nu(B(x, r))\lessapprox K_2 r^{\beta}.
\end{equation*}
Then for $1\le |\xi|\le \delta^{-1}$ we have 
$$
 \int_{\R} \left| \int_{\R} e(-xy\xi) d\mu(x)\right| d\nu(y)  \lessapprox \sqrt{K_1K_2} ~ |\xi|^{-\frac{\alpha+\beta-1}{2}}.
$$
\end{lemma}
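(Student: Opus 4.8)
The plan is to recognize the left-hand side as a bilinear Fourier-decay functional and to deduce the estimate from Bourgain's theorem rather than argue from scratch. First I would rewrite the inner integral as a Fourier transform: since $\int_\R e(-xy\xi)\,d\mu(x)=\widehat{\mu}(y\xi)$, the quantity to be bounded is exactly $\int_\R |\widehat{\mu}(y\xi)|\,d\nu(y)$, the average of $|\widehat{\mu}|$ sampled along the $\xi$-dilate of the support of $\nu$. This is precisely the functional controlled by Bourgain \cite[Theorem 7]{Bourgain2010}; the content of the present lemma is only that his estimate survives the change of supporting interval and is stated with the explicit Frostman constants $K_1,K_2$ and the range $1\le|\xi|\le\delta^{-1}$.

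For the reduction I would proceed in two moves. First, away from the origin, dyadically decompose $[0,4]$ into the intervals $I_j=[2^{-j-1},2^{-j}]$ with $2^{-j}\gtrsim|\xi|^{-1}$ (there are $\lessapprox 1$ of them) and, on each, apply the affine rescaling $y\mapsto 2^{j}y$ carrying $I_j$ to a fixed interval bounded away from $0$. Under this rescaling the phase becomes $x\,y'\,(2^{-j}\xi)$ with $1\le 2^{-j}|\xi|\le\delta^{-1}$, the measure $\nu|_{I_j}$ becomes a $\beta$-Frostman measure whose constant only improves by a factor $2^{-j\beta}\le1$, and $\mu$ is unchanged; applying Bourgain's theorem piece by piece and summing the $\lessapprox 1$ contributions produces the target $\sqrt{K_1K_2}\,|\xi|^{-(\alpha+\beta-1)/2}$, the logarithmic number of pieces being absorbed into $\lessapprox$. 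Second, on the single remaining piece $[0,\,c|\xi|^{-1}]$ the bilinear form does not decay, so I would simply use $|\widehat{\mu}(y\xi)|\le1$ together with the Frostman bound $\nu([0,c|\xi|^{-1}])\lessapprox K_2|\xi|^{-\beta}$; one checks this is dominated by the target, the exponents matching because $\alpha\le1$. The same two moves give the statement for an arbitrary bounded interval, so the particular choice $[0,4]$ plays no role.

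The main difficulty is \emph{not} in this bookkeeping but in Bourgain's theorem itself, and it is worth recording why no elementary substitute is available. If one instead applies Cauchy--Schwarz against the probability measure $\nu$, one obtains $\big(\int|\widehat{\mu}(y\xi)|\,d\nu(y)\big)^2\le\int|\widehat{\mu}(y\xi)|^2\,d\nu(y)=:J$; expanding the square and integrating in $y$ first turns this into $J=\int\int \widehat{\nu}((x-x')\xi)\,d\mu(x)\,d\mu(x')=\int\widehat{\nu}(w\xi)\,d\rho(w)$, where $\rho=\mu\ast\widetilde{\mu}$ is again $\alpha$-Frostman with constant $\lessapprox K_1$. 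Thus $|J|\le\int|\widehat{\nu}(w\xi)|\,d\rho(w)$ is the very same bilinear functional with the pair $(\rho,\nu)$ in place of $(\mu,\nu)$, and since the target exponent $(\alpha+\beta-1)/2$ is symmetric in $\alpha$ and $\beta$, feeding the lemma back in yields only $J\lessapprox\sqrt{K_1K_2}\,|\xi|^{-(\alpha+\beta-1)/2}$, whence $\int|\widehat{\mu}(y\xi)|\,d\nu(y)\le J^{1/2}\lessapprox (K_1K_2)^{1/4}|\xi|^{-(\alpha+\beta-1)/4}$ — short of the claim by a square root. Hence the decay genuinely cannot be extracted from the additive (energy) structure alone: it is a sum--product phenomenon arising from the multiplicative coupling in the phase $xy\xi$, which is exactly what Bourgain's bilinear estimate supplies. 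I expect the only real care needed in the write-up to be the transformation law for the Frostman constants under the dyadic rescaling and the verification that the $\sqrt{K_1K_2}$ dependence and the exponent $(\alpha+\beta-1)/2$ emerge intact after summation.
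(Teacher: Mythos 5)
Your overall strategy coincides with the paper's: Lemma~\ref{lem:anyinterval} is not proved from scratch in the paper either, but is deferred to Bourgain \cite[Theorem 7]{Bourgain2010}, together with the assertion that the choice of interval is immaterial (your dyadic rescaling is a reasonable way to make that assertion precise). The genuine gap is your claim that $\int_{\R}|\widehat{\mu}(y\xi)|\,d\nu(y)$ ``is precisely the functional controlled by Bourgain [Theorem 7].'' It is not: as the paper's accompanying remark stresses, the \emph{statement} of Theorem 7 bounds only
$$
\left|\int_{\R}\int_{\R} e(-xy\xi)\,d\mu(x)\,d\nu(y)\right|,
$$
with the modulus outside both integrals, and it carries no explicit $\sqrt{K_1K_2}$ dependence; both the modulus-inside form and the constant must be extracted from Bourgain's \emph{proof} (see \cite[(8.3)]{Bourgain2010}, and the Cauchy--Schwarz step there which produces $\sqrt{K_1K_2}$). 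Consequently your reduction, which applies the stated theorem as a black box on each dyadic piece and sums, yields at best a bound on $\sum_j \bigl|\int_{I_j}\widehat{\mu}(y\xi)\,d\nu(y)\bigr|$, in which the modulus still sits outside the $y$-integration; this is strictly weaker than the pointwise-in-$y$ modulus the lemma asserts, and the modulus-inside form is exactly what is needed later, at~\eqref{eq:Bourgain}. One could try to repair this by writing $|\widehat{\mu}(y\xi)|=g(y)\widehat{\mu}(y\xi)$ with $|g|=1$, splitting $g$ into four nonnegative bounded pieces and renormalizing each $g_i\,d\nu$ to a probability measure; but the renormalization inflates the Frostman constant by $\|g_i\,d\nu\|^{-1}$, so this repair itself requires the explicit $\sqrt{K_1K_2}$ dependence of the black box --- i.e., it still forces you into Bourgain's proof rather than his statement, which is precisely the content of the paper's remark.

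There are also smaller bookkeeping gaps, all fixable. Bourgain's bilinear estimate is for measures on a fixed interval, so both variables need treatment: you rescale only the $y$-variable and declare ``$\mu$ is unchanged,'' but $\mu$ also lives on $[0,4]$, so its mass near $0$ must likewise be split off and estimated. On the near-zero piece, your bound $\nu([0,c|\xi|^{-1}])\lessapprox K_2|\xi|^{-\beta}$ is \emph{not} dominated by $\sqrt{K_1K_2}\,|\xi|^{-(\alpha+\beta-1)/2}$ when $K_2\gg K_1$; one needs in addition the trivial bound $\nu([0,c|\xi|^{-1}])\le 1$ together with $K_1,K_2\gtrapprox 1$ (which hold because $\mu,\nu$ are probability measures on a bounded interval), and then a two-case comparison. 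Finally, the restrictions $\nu|_{I_j}$ are sub-probability measures, so after renormalization the constants fed into the black box are $\nu(I_j)^{-1}K_2 2^{-j\beta}$ rather than $K_2 2^{-j\beta}$; the square root in the constant saves the day (the resulting contribution scales like $\sqrt{\nu(I_j)}$), but this is exactly the verification your write-up defers.
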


We remark that the statement of \cite[Theorem 7]{Bourgain2010} gives a bound for 
$$
\left| \int_{\R}  \int_{\R} e(-xy\xi) d\mu(x)d\nu(y)\right|.
$$
However the proof of \cite[Theorem 7]{Bourgain2010} (see \cite[(8.3)]{Bourgain2010}) indeed works for the bound
$$
\int_{\R} \left| \int_{\R} e(-xy\xi) d\mu(x)\right| d\nu(y),
$$
which is used for bounding~\eqref{eq:Bourgain} below. Moreover the term $\sqrt{K_1K_2}$ appears,  since the using of  Cauchy-Schwarz inequality in the proof of \cite[Theorem 7]{Bourgain2010}.

In particularly we have the following version for $\D$-sets which is easier for our  using.

\begin{lemma}
\label{lem:bilinear}
Let $0<\alpha, \beta<1$. Let $A\subset [0,4]$ and $B\subset [0,4]$ be $(\delta, \alpha)$-set and $(\delta, \beta)$-set respectively. Then for any $1\le \xi\le \delta^{-1}$ we have 
$$
 \int_{A}\left| \int_{B} e(-xy\xi) dx \right| dy  \lessapprox  |\xi|^{-\frac{\alpha+\beta-1}{2}}\delta^{\frac{2-\alpha-\beta}{2}}\sqrt{|A||B|}.
$$
\end{lemma}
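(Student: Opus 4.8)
The plan is to deduce Lemma~\ref{lem:bilinear} directly from Lemma~\ref{lem:anyinterval} by replacing the sets $A$ and $B$ with their associated normalized (probability) measures. Concretely, I would let $\nu$ be the measure with density $A(y)/|A|$ and $\mu$ be the measure with density $B(x)/|B|$; both are probability measures supported in $[0,4]$. Rewriting the two indicator functions in terms of these measures gives the identity
$$
\int_A \left| \int_B e(-xy\xi)\,dx \right| dy = |A|\,|B| \int_\R \left| \int_\R e(-xy\xi)\,d\mu(x)\right| d\nu(y),
$$
so the problem reduces to estimating the right-hand integral, which is precisely the quantity bounded by Lemma~\ref{lem:anyinterval}.

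Next I would record the Frostman-type exponents of the two measures. Since $B$ is a $(\delta,\beta)$-set, for every $\delta \le r < 1$ and every $z\in\R$ one has $\mu(B(z,r)) = |B \cap B(z,r)|/|B| \lessapprox \delta^{1-\beta} r^{\beta}/|B|$, so $\mu$ satisfies the hypothesis of Lemma~\ref{lem:anyinterval} with exponent $\beta$ and constant $K_1 = \delta^{1-\beta}/|B|$. In the same way, since $A$ is a $(\delta,\alpha)$-set, $\nu$ satisfies the hypothesis with exponent $\alpha$ and constant $K_2 = \delta^{1-\alpha}/|A|$. The roles of $\alpha$ and $\beta$ are interchanged relative to the labeling in the statement, but the resulting exponent $\tfrac{\alpha+\beta-1}{2}$ is symmetric, so this causes no difficulty.

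Now I would apply Lemma~\ref{lem:anyinterval} and combine the constants. It yields
$$
\int_\R \left| \int_\R e(-xy\xi)\,d\mu(x)\right| d\nu(y) \lessapprox \sqrt{K_1 K_2}\,|\xi|^{-\frac{\alpha+\beta-1}{2}} = \frac{\delta^{\frac{2-\alpha-\beta}{2}}}{\sqrt{|A|\,|B|}}\,|\xi|^{-\frac{\alpha+\beta-1}{2}}.
$$
Multiplying by the factor $|A|\,|B|$ from the identity above collapses $|A|\,|B|/\sqrt{|A|\,|B|}$ to $\sqrt{|A|\,|B|}$ and produces exactly the claimed estimate.

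There is essentially no analytic obstacle in this argument: the genuine content is Bourgain's bilinear bound (Lemma~\ref{lem:anyinterval}), and the present lemma is a clean specialization of it. The only points that require care are bookkeeping ones, namely verifying that $\mu$ and $\nu$ are honest probability measures supported in $[0,4]$, that the $(\delta,\sigma)$-set condition supplies the required Frostman exponents across the full range $\delta \le r < 1$ demanded by Lemma~\ref{lem:anyinterval} (with the endpoint behaviour absorbed into the $\lessapprox$ notation), and that the powers of $\delta$, $|\xi|$, $|A|$, and $|B|$ are assembled correctly at the end.
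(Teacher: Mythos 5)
Your proof is correct and follows essentially the same route as the paper: normalize the indicator functions of $A$ and $B$ to probability measures, verify the Frostman conditions with constants $K_1=\delta^{1-\beta}/|B|$ and $K_2=\delta^{1-\alpha}/|A|$, and apply Lemma~\ref{lem:anyinterval}. In fact your write-up is slightly cleaner than the paper's, which contains a typo in the definition of the second measure (it writes $|X\cap A|/|A|$ where $|X\cap B|/|B|$ is meant) and cites Lemma~\ref{lem:bilinear} instead of Lemma~\ref{lem:anyinterval} in its final line.
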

\begin{proof}
For $A$ we define a measure $\mu$ by letting
$$
\mu (X)=\frac{|X\cap A|}{|A|} \text{ for } X\subset \R.
$$
By the condition that $A$ is a $(\delta, \alpha)$-set we obtain 
\begin{equation}
\label{eq:mu}
\mu(B(x, r)) \lessapprox r^{\alpha}\delta^{1-\alpha}|A|^{-1} \text{ for } \delta\le  r\le 1.
\end{equation}

Similarly for $B$ we define a measure $\nu$ by letting 
$$
\nu (X)=\frac{|X\cap A|}{|A|} \text{ for } X\subset \R,
$$
and we have 
\begin{equation}
\label{eq:nu}
\nu(B(x, r)) \lessapprox r^{\beta}\delta^{1-\beta}|B|^{-1} \text{ for } \delta\le  r\le 1.
\end{equation}
Clearly we have 
$$
 \int_{A} \left|\int_{B} e(-xy\xi) dx \right| dy  = |A||B| \int_{A} \left| \int_{B} e(-xy\xi) d\mu(x) \right| d \nu(y).
$$
Then  Lemma \ref{lem:bilinear} and estimates \eqref{eq:mu}, \eqref{eq:nu} give the result.
\end{proof}

\section{Proof of Theorem \ref{thm:main}}

By adapting the arguments in Guth, Katz, and Zahl \cite{GKZ} and especially in Garaev \cite{G}  we obtain the following.

\begin{lemma}
\label{lem:G}
Let $0<\sigma<1$. Let $A\subset [1, 2]$ be a $\D$-set with measure $|A|=\delta^{1-\sigma}$. Then there exists a $\D$-set $T\subset [0.4, 2.1]$ with measure 
$$
|T|\gtrapprox \frac{|A|^{2}}{|AA|}
$$ such that for any $t\in T$ we have 
$$
|A+tA|\lessapprox \frac{|A+A|^{2}|AA|}{|A|^{2}}.
$$
\end{lemma}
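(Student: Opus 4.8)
The plan is to reduce the bound on $|A+tA|$ to a statement about the \emph{multiplicative popularity} of the slope $t$, and then to manufacture the set $T$ by pigeonholing on the multiplicative energy of $A$. For a fixed ratio $t\in[1/2,2]$, write $A_t=A\cap t^{-1}A$ and let $r(t)=|A_t|$ be the measure of those $a\in A$ with $ta\in A$. The crucial observation is the inclusion $tA_t=tA\cap A\subseteq A$. Feeding this into the discretized Ruzsa triangle inequality (Lemma~\ref{lem:Ruzsa}) with centre $tA_t$, applied to $A$ and $tA$, gives
\[
|A+tA|\lessapprox\frac{|A+tA_t|\,|tA_t+tA|}{|tA_t|}.
\]
Because $tA_t\subseteq A$ we have $|A+tA_t|\le|A+A|$ and $|tA_t+tA|=t\,|A_t+A|\approx|A_t+A|\le|A+A|$ (the dilation by $t\in[1/2,2]$ costing nothing, by \eqref{eq:nnn}), while $|tA_t|\approx r(t)$. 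Hence, for \emph{every} $t$,
\[
|A+tA|\lessapprox\frac{|A+A|^2}{r(t)} .
\]
It therefore suffices to exhibit a large $(\delta,\sigma)$-set of slopes on which $r(t)\gtrapprox|A|^2/|AA|$.

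Second, I would locate such slopes through the multiplicative energy $E_\times=\int r(t)^2\,dt$, the multiplicative analogue of the additive energy \eqref{eq:energy}. Changing variables $a\mapsto\log a$ (whose Jacobian is $\approx1$ on $[1,2]$) turns $A$ into a subset of $[0,\log 2]$, products into sums, and $E_\times$ into an additive energy, so the multiplicative form of \eqref{eq:energyandsumset} yields $E_\times\gtrapprox|A|^4/|AA|$; a direct computation also gives $\int r(t)\,dt\approx|A|^2$, and trivially $r(t)\le|A|$. I would then choose, by dyadic pigeonholing, the level $\rho$ maximising $\rho^2\,|\{t:r(t)\sim\rho\}|$ and set $T=\{t:r(t)\sim\rho\}$. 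This energy-popular level carries almost all of the energy, so $\rho^2|T|\gtrapprox E_\times$, while $\rho|T|\le\int r\approx|A|^2$. Combining these two estimates with $E_\times\gtrapprox|A|^4/|AA|$ and $\rho\le|A|$ gives simultaneously
\[
\rho\gtrapprox\frac{E_\times}{|A|^2}\gtrapprox\frac{|A|^2}{|AA|}\qquad\text{and}\qquad |T|\gtrapprox\frac{E_\times}{\rho^2}\ge\frac{E_\times}{|A|^2}\gtrapprox\frac{|A|^2}{|AA|}.
\]
Since $r(t)\approx\rho\gtrapprox|A|^2/|AA|$ on $T$, the first step then delivers $|A+tA|\lessapprox|A+A|^2|AA|/|A|^2$ for all $t\in T$, and $T\subseteq A/A\subseteq[1/2,2]\subseteq[0.4,2.1]$ has the required measure. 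I stress that using the energy-popular level (rather than the mass-popular one) is what lets me pin down both $\rho\gtrapprox|A|^2/|AA|$ and $|T|\gtrapprox|A|^2/|AA|$ from the single input $E_\times\gtrapprox|A|^4/|AA|$.

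Finally, one must check that $T$ can be taken to be a genuine $(\delta,\sigma)$-set, and this is the step I expect to be the main obstacle. A priori the super-level set of $r$ is only a measurable set of the correct measure, and one still has to verify the non-concentration bound $|T\cap B(x,\ell)|\lessapprox\delta(\ell/\delta)^\sigma$. I would try to obtain this either from the $(\delta,\sigma)$-structure inherited from $A$ (the admissible slopes being ratios built out of $A$), or by a further pigeonholing that refines $T$ to a regular subset while only shedding $\lessapprox$-factors and keeping the measure $\gtrapprox|A|^2/|AA|$. The remaining points are routine discretized bookkeeping: passing between the continuous sets and their $(\delta/3)\Z$-discretizations as in the proof of Lemma~\ref{lem:Ruzsa}, controlling the dilations $t\,S$ via \eqref{eq:nnn} and Lemma~\ref{lem:continuous}, and absorbing the $O(\log(1/\delta))$ loss from the dyadic pigeonholing into the $\lessapprox$ notation.
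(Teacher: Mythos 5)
Your first two steps are sound, and they are a genuinely different (and clean) route to the two quantitative bounds: the paper gets the pointwise bound and the measure bound by Cauchy--Schwarz applied to the covering function $f=\sum_i {\bf 1}_{a_iA}$ for a $\delta$-separated set $D=\{a_i\}\subset A$, followed by pigeonholing to a single popular index $j_0$ and a popular overlap level $\tau$; you get the same two bounds from the multiplicative energy $E_\times=\int r(t)^2\,dt\gtrapprox |A|^4/|AA|$ and an energy-popular level set. Your Ruzsa step with centre $A\cap t^{-1}A$ is the same Garaev-style trick the paper runs with $U_i\cap U_{j_0}$ (up to the routine issue that $A\cap t^{-1}A$ need not be $\delta$-discretized, which is repaired exactly as in the paper by passing to grid intervals).

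The genuine gap is the one you flagged yourself: proving that your level set $T=\{t: r(t)\sim\rho\}$ is a $\D$-set, and neither of your proposed repairs works. The best non-concentration your method yields is
\begin{equation*}
|T\cap B(x,\ell)| \le \frac{1}{\rho}\int_{T\cap B(x,\ell)} r(t)\,dt
\lessapprox \frac{|A|}{\rho}\,\delta\Bigl(\frac{\ell}{\delta}\Bigr)^{\sigma}
\lessapprox \frac{|AA|}{|A|}\,\delta\Bigl(\frac{\ell}{\delta}\Bigr)^{\sigma},
\end{equation*}
which loses a factor $K=|AA|/|A|$; since in the application of the lemma one argues under the hypothesis $K\le \delta^{-c}$ for a fixed $c>0$, this loss is \emph{not} absorbable into $\lessapprox$ (which only tolerates $\delta^{-O(\epsilon)}$), and it would propagate through Lemma~\ref{lem:bilinear} and Lemma~\ref{lem:B}, where the exponent-$\sigma$ non-concentration of $T$ is used quantitatively. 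Your fix (a) fails because the slopes in $T$ are only ratios $a'/a$ with \emph{varying} denominators, and the ratio set $A/A$ does not inherit non-concentration from $A$; your fix (b) fails because an arbitrary measurable set of measure $|A|/K$ need not contain a $\D$-subset of comparable measure (if $T$ were an interval of length $|A|/K$, its largest $\D$-subset has measure only about $\delta^{1-\sigma}(|A|/K)^{\sigma}$, far less than $|A|/K$ in the relevant regime), so refinement cannot substitute for a non-concentration estimate you do not have. The paper's construction is designed precisely to avoid this: after pigeonholing it takes $T=\bigcup_{i\in D_\tau} B(a_i/a_{j_0},\delta/2)$ with a \emph{fixed} denominator $a_{j_0}$, so division by $a_{j_0}\in[1,2]$ is bi-Lipschitz and $T$ inherits the bound \eqref{eq:D} from $D\subset A$ at every scale. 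That fixed-denominator structure is exactly what the energy/level-set route discards, and recovering it (e.g., by pigeonholing $r(t)$ over pairs sharing a common denominator, as the paper effectively does) is the missing idea rather than a bookkeeping detail.
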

\begin{proof}
Let $D=\{a_1, \ldots, a_N\}$ be a maximal $\delta$-separated subset of $A$, i.e., any two distinct elements of $D$ has distance at least $\delta$, furthermore for any $a\in A$ there exists $a_i\in D$ such that $|a-a_i|\le\delta$. Note that  
\begin{equation}
\label{eq:N}
N \approx \delta^{-\sigma},
\end{equation}
and for all $\delta \le r\le 1$ and  $x\in \R$, 
\begin{equation}
\label{eq:D}
\# \left(D \cap B(x, r) \right) \lessapprox \left(\frac{r}{\delta} \right)^{\sigma}.
\end{equation}

Let 
$$
f(x)=\sum_{i=1}^{N} {\bf 1}_{a_i A}(x).
$$
Then we have 
\begin{equation}
\label{eq:first}
\int_{\R} f(x)dx=\sum_{i=1}^N|a_i A|\ge N|A|,
\end{equation}
and 
\begin{equation}
\label{eq:second}
\int_{\R} f(x)^2dx= \sum_{1\le i, j\le N} |a_iA\cap a_j A|.
\end{equation}
By Cauchy-Schwarz inequality  we arrive 
\begin{equation}
\label{eq:CSnew}
\left(\int_{\R} f(x)dx \right)^2\le \int_\R f(x)^2 dx \left|\{x\in \R: f(x)>0\} \right|.
\end{equation}
Observe that 
$$
\{x\in \R: f(x)>0\} \subset AA.
$$
Thus together with \eqref{eq:first}, \eqref{eq:second}, and \eqref{eq:CSnew}, we obtain 
$$
\sum_{1\le i, j\le N} |a_i A \cap a_j A|\ge \frac{N^2|A|^{2}}{|AA|}.
$$
Thus there exists $1\le j_0\le N$ such that 
$$
\sum_{1\le i\le N} |a_i A \cap a_{j_0} A|\ge \frac{N|A|^{2}}{|AA|}.
$$
Let 
$$
P=\{1\leq i\le N: |a_i A\cap a_{j_0}A|\ge \frac{|A|^{2}}{2|AA|}\}.
$$
Then 
$$
\sum_{i\in P} |a_i A \cap a_{j_0} A|\ge \frac{N|A|^{2}}{2|AA|}.
$$
Taking dyadic decomposition for  $|a_i A \cap a_{j_0} A|$ with $i\in P$,  we obtain 
$$
\sum^{K}_{k=1}2^{-k}\# P_k\gg \sum_{i\in P} |a_i A \cap a_{j_0} A|\ge\frac{N|A|^{2}}{2|AA|}, 
$$
where 
$$
P_k=\{i \in P:  2^{-k-1}<|a_i A \cap a_{j_0} A|\le 2^{-k}\},
$$
and $K$ is an integer parameter such that 
$$
2^{-K-1}<  \frac{|A|^{2}}{2|AA|}\le 2^{-K}.
$$
Since the product set $AA$ is a subset of $ [1, 4]$,  we have $|AA|\ll 1$. 
It follows that 
$$
K\ll \log\left(\frac{1}{\delta}\right).
$$
Thus there exist $\tau$ and $D_\tau\subset P$ such that 
\begin{equation*}
\tau \# D_\tau \gtrapprox \sum^{K}_{k=1}2^{-k}\# P_k\gg \frac{N|A|^{2}}{|AA|},
\end{equation*}
and for any $i\in D_\tau$ we have 
\begin{equation}
\label{eq:tau}
\tau\le |a_i A \cap a_{j_0} A|\le 2 \tau.
\end{equation}
Since $\tau \le|A|$ and $\#D_\tau \le N$, we obtain 
\begin{equation}
\label{eq:taug}
\tau \gtrapprox \frac{|A|^{2}}{|AA|},
\end{equation}
and 
\begin{equation}
\label{eq:Dtau}
\# D_\tau \gtrapprox \frac{N|A|}{|AA|}.
\end{equation}

Now we intend to bound the measure of the set $a_iA+a_{j_0}A$ for each $i\in D_{\tau}$. For this purpose we introduce some notation first.

For each $k\in \Z$ let 
$
J_{k, \delta}=[k\delta, (k+1)\delta).
$
For each $1\le i\le N$ let 
$$
U_i=\bigcup_{k\in \Z, \,a_i A \cap J_{k,\delta}\neq \emptyset } J_{k, \delta}.
$$
Note that   
\begin{equation}
\label{eq:AU}
a_iA \subseteq U_i \subseteq a_iA +B(0, \delta),
\end{equation}
and the intersection $U_i\cap U_j$ is a $\delta$-discretized  set for  $1\le i, j\le N$. Moreover 
 for each  $i\in D_{\tau}$  by \eqref{eq:tau}, \eqref{eq:taug} we have
\begin{equation}
\label{eq:non-zero}
|U_i\cap U_{j_0}| \ge |a_iA \cap a_{j_0} A| \gtrapprox \frac{|A|^2}{|AA|}.
\end{equation}

For any $i\in D_\tau$ applying  Ruzsa triangle inequality    Lemma \ref{lem:Ruzsa} for the sets $a_iA, a_{j_0} A$ and $U_i\cap U_{j_0}$, we derive 
\begin{equation}
\label{eq:sumset}
|a_iA+a_{j_0}A | \lessapprox \frac{|a_i A+ U_i\cap U_{j_0} | |U_i\cap U_{j_0} +a_{j_0}A|}{|U_i\cap U_{j_0} |}.
\end{equation}
By \eqref{eq:AU} we have 
$$
a_iA+ U_i\cap U_{j_0} \subseteq a_iA+a_iA+B(0, \delta).
$$
Since $1\le a_i\le 2$ and the simper fact \eqref{eq:nnn} we obtain 
$$
|a_iA+ U_i\cap U_{j_0}|\ll |A+A|.
$$
Similarly, we have 
$$
|a_{j_0}A+U_i\cap U_{j_0} |\ll |A+A|.
$$
Combining with \eqref{eq:non-zero} and \eqref{eq:sumset} we arrive
\begin{equation*}
\begin{split}
|a_iA+a_{j_0}A|\lessapprox \frac{|A+A|^2|AA|}{|A|^{2}}.
\end{split}
\end{equation*}

Applying Lemma \ref{lem:continuous} we obtain that for any $i\in D_\tau$ and $x \in (-\delta, \delta)$ 
we have 
$$
|A+(a_i/a_{j_0}+x)A| \lessapprox \frac{|A+A|^2|AA|}{|A|^{2}}.
$$
Let  
$$
T=\bigcup_{i\in D_\tau} B(a_i/a_{j_0}, \delta/2).
$$
We ask that $\delta$ is a small positive parameter, and hence $T\subset [0.4, 2.1]$.
Furthermore, the estimates \eqref{eq:N},  \eqref{eq:Dtau} imply
$$
|T| \gg \# D_\tau \delta \gtrapprox \frac{|A|^{2}}{|AA|}.
$$
By \eqref{eq:D} we obtain that $T$ is a $\D$-set which finishes the proof.
\end{proof}

Applying Lemma \ref{lem:bilinear} we obtain the following  upper bound of the  mean value of energies $E(A, tA)$.

\begin{lemma} 
\label{lem:B} Fix $1/2<\sigma <1$.
Let $T\subset [0.4, 2.1]$ be a $\D$-set with the measure $|T|\ge \delta$. Let $A\subset [1, 2]$ be a $\D$-set with measure $|A|=\delta^{1-\sigma}$. Then 
$$
\int_{T} \mathsf{E} (A, tA) dt \lessapprox |A|^{3}|T|  \left (|A|^{\frac{2\sigma}{1+2\sigma}}|T|^{-\frac{1}{1+2\sigma}}+\delta(|A||T|)^{-1} \right).
$$
\end{lemma}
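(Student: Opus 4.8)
The plan is to pass to the Fourier side and feed the bilinear estimate of Lemma~\ref{lem:bilinear} into it. First I would use the scaling relation $\widehat{tA}(\xi)=t\,\widehat{A}(t\xi)$ together with the spectral form of the energy in \eqref{eq:energy}. Since $t\in[0.4,2.1]$ the factor $t^2$ is harmless, so
$$
\int_T \E(A,tA)\,dt = \int_T t^2\!\int_\R |\widehat{A}(\xi)|^2\,|\widehat{A}(t\xi)|^2\,d\xi\,dt \lessapprox \int_\R |\widehat{A}(\xi)|^2\,G(\xi)\,d\xi,\qquad G(\xi):=\int_T |\widehat{A}(t\xi)|^2\,dt .
$$
This reduces the problem to estimating the inner average $G(\xi)$ and then integrating it against $|\widehat{A}|^2$, for which I would keep in reserve the Plancherel identity \eqref{eq:Plan}, $\int_\R|\widehat{A}|^2=|A|$.

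For $G(\xi)$ I would record two bounds. The trivial one, from $|\widehat{A}|\le|A|$, is $G(\xi)\le |A|^2|T|$. The nontrivial one comes from first lowering the exponent, $|\widehat{A}(t\xi)|^2\le |A|\,|\widehat{A}(t\xi)|$, and then applying Lemma~\ref{lem:bilinear} with the two $\D$-sets taken to be $A$ and $T$ (both of exponent $\sigma$); for $1\le|\xi|\le\delta^{-1}$, and using $\delta^{1-\sigma}=|A|$, this gives
$$
G(\xi)\le|A|\int_T|\widehat{A}(t\xi)|\,dt\lessapprox |A|^{5/2}|T|^{1/2}\,|\xi|^{-\frac{2\sigma-1}{2}} .
$$
Thus on the band $1\le|\xi|\le\delta^{-1}$ one has $G(\xi)\lessapprox \min\big(|A|^2|T|,\ |A|^{5/2}|T|^{1/2}|\xi|^{-\frac{2\sigma-1}{2}}\big)$, the bilinear term overtaking the trivial one once $|\xi|$ is large.

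The last and main step is the $\xi$-integration, which I would split into the low band $|\xi|\le 1$, the bilinear band $1\le|\xi|\le\delta^{-1}$, and the tail $|\xi|>\delta^{-1}$. On the low band $|\widehat{A}|$ is essentially constant $\approx|A|$ (as $A\subset[1,2]$ has diameter $1$), contributing $\lessapprox|A|^4|T|$, which one checks is dominated by the first term of the claim since $|A||T|\lessapprox 1$. The tail is where the $\delta$-discretization of $A$ forces decay of $\widehat{A}$; there the trivial bound together with the $\delta$-scale cut-off is expected to yield the diagonal term $\delta(|A||T|)^{-1}$ after multiplying back by $|A|^3|T|$. The heart of the matter is the bilinear band: interpolating the two bounds as $\min(X,Y|\xi|^{-s})\le X^{1-\theta}Y^{\theta}|\xi|^{-s\theta}$ with $s=\tfrac{2\sigma-1}{2}$ and integrating against $|\widehat{A}|^2$, matching the power of $|T|$ to the claim forces the balancing exponent $\theta=\tfrac{2}{1+2\sigma}$, and this is precisely where the denominator $1+2\sigma$ is born. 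To control the weighted integral $\int |\widehat{A}(\xi)|^2|\xi|^{-s\theta}\,d\xi$ sharply enough I would use, beyond Plancherel, the $\D$-structure of $A$ in the form of the local $L^2$-mass bound $\int_{|\xi|\le R}|\widehat{A}|^2\lessapprox |A|^2R^{1-\sigma}$ for $1\le R\le\delta^{-1}$ (consistent with Plancherel at $R=\delta^{-1}$), which follows from the $\D$-estimate applied to the autocorrelation $|A\cap(A+w)|$. The main obstacle is exactly this bookkeeping: combining the interpolation exponent $\theta$ with the local $L^2$-mass bound and verifying that the powers of $|A|$ and $|T|$ collapse to the stated $|A|^{\frac{2\sigma}{1+2\sigma}}|T|^{-\frac{1}{1+2\sigma}}$, while confirming that the low-band and tail contributions are genuinely absorbed into the two terms of the claim.
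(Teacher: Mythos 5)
Your opening reduction to $\int_\R |\wA(\xi)|^2 G(\xi)\,d\xi$ with $G(\xi)=\int_T|\wA(t\xi)|^2\,dt$, and your two bounds $G(\xi)\le |A|^2|T|$ and $G(\xi)\lessapprox |A|^{5/2}|T|^{1/2}|\xi|^{-(2\sigma-1)/2}$ (via $|\wA|^2\le|A|\,|\wA|$ and Lemma~\ref{lem:bilinear}), are exactly the paper's starting point. Where you diverge is the main band $1\le|\xi|\le\delta^{-1}$: the paper introduces a free cut-off $L$, uses the trivial bound on $|\xi|\le L$ (giving $|A|^4|T|L$), and on $|\xi|\ge L$ simply pulls out $|\xi|^{-(2\sigma-1)/2}\le L^{-(2\sigma-1)/2}$ and applies Plancherel (giving $|A|^{7/2}|T|^{1/2}L^{-(2\sigma-1)/2}$); balancing yields $L_0=(|A||T|)^{-1/(1+2\sigma)}$, which is where the denominator $1+2\sigma$ is really born. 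You instead fix the split at $|\xi|=1$, interpolate the two bounds on $G$ with $\theta=\tfrac{2}{1+2\sigma}$ (correctly forced by matching $|T|$-powers), and then need the local mass bound $\int_{|\xi|\le R}|\wA|^2\lessapprox |A|^2R^{1-\sigma}$. That extra ingredient is true and provable as you sketch: $\int_{|w|\le\rho}|A\cap(A+w)|\,dw=\int_A|A\cap B(x,\rho)|\,dx\lessapprox |A|^2\rho^{\sigma}$ by the $\D$-property, and a Fej\'er-kernel plus dyadic-scale argument converts this into the Fourier statement. Your bookkeeping does close: dyadic summation gives $\int_{1\le|\xi|\le\delta^{-1}}|\wA(\xi)|^2|\xi|^{-\frac{2\sigma-1}{1+2\sigma}}\,d\xi\lessapprox\max\bigl(|A|^2,\,|A|\delta^{\frac{2\sigma-1}{1+2\sigma}}\bigr)$, and both quantities are at most $|A|^{\frac{4\sigma}{1+2\sigma}}$ (the second exactly because $\sigma\ge 1/2$), which is what the first term of the lemma demands. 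So your main-band route works; it is genuinely different in that it exploits the $\D$-structure of $A$ on the Fourier side, whereas the paper's choice-of-$L$ trick needs nothing beyond Plancherel and is accordingly shorter.

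The genuine gap is the tail $|\xi|>\delta^{-1}$. Neither ``decay of $\wA$ forced by the $\delta$-discretization'' nor ``the trivial bound plus the $\delta$-scale cut-off'' produces the required term $|A|^2\delta=|A|^3|T|\cdot\delta(|A||T|)^{-1}$. Concretely, the pointwise decay available is $|\wA(u)|\lesssim \delta^{-\sigma}|u|^{-1}$ for $|u|\ge\delta^{-1}$, and feeding this into $G$ bounds the tail by about $\delta^{3-3\sigma}|T|$, which exceeds the needed $\delta^{3-2\sigma}$ whenever $|T|\gg\delta^{\sigma}$ --- and the lemma allows $|T|$ as large as a constant. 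Likewise, the trivial bound $G\le|A|^2|T|$ together with Plancherel over $|\xi|>\delta^{-1}$ only gives $|A|^3|T|$, since a $\delta$-discretized set can carry a constant fraction of its $L^2$ Fourier mass beyond $\delta^{-1}$. What actually works --- and is the paper's argument --- is averaging in $t$ rather than decay in $\xi$: change variables $u=t\xi$ and apply Plancherel in $u$ to get $G(\xi)\le\int_\R|\wA(t\xi)|^2\,dt=|A|\,|\xi|^{-1}\le |A|\delta$ on the tail, and then Plancherel in $\xi$ gives the tail contribution $\le|A|^2\delta$. Replace your tail heuristic with this two-line argument; with that fix, and with your main-band computation written out, your proof is complete.
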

\begin{proof}
For each $t\in \R$ and $\xi\in \R$ we have 
$$
\widehat{tA}(\xi)=t\widehat{A}(t\xi).
$$
Thus by \eqref{eq:energy} and the condition $T\subset [0.4, 2.1]$ we conclude that 
\begin{equation}\label{eq:AE}
\begin{split}
\int_T\E(A, tA)dt \ll \int_{T}\int_{\R} |\wA(\xi)|^{2}|\wA(t\xi)|^{2}dt d\xi.
\end{split}
\end{equation}

Let $0<L<1/\delta$ be a parameter which will be determined later. We decompose $\R$ into three parts, and then bound  \eqref{eq:AE} by  three  corresponding  parts. Precisely, 
\begin{equation} 
\label{eq:three}
\eqref{eq:AE}\ll I_0+I_1+I_2
\end{equation}
where 
\begin{equation*}
I_0=\int_{T}\int_{|\xi|\le L} |\wA(\xi)|^{2}|\wA(t\xi)|^{2}dt d\xi,
\end{equation*}
\begin{equation*}
I_1=\int_{T}\int_{L\le |\xi|\le \delta^{-1}} |\wA(\xi)|^{2}|\wA(t\xi)|^{2}dt d\xi,
\end{equation*}
and 
\begin{equation*}
I_2=\int_{T}\int_{|\xi|\ge \delta^{-1}} |\wA(\xi)|^{2}|\wA(t\xi)|^{2}dt d\xi.
\end{equation*}

For $I_0$, we use the trivial bound $|\widehat{A}(\xi)|\le |A|$, and we obtain  
\begin{equation}
\label{eq:I0}
I_0\le |A|^{4}|T|L.
\end{equation}

For $I_1$, clearly the trivial bound $|\widehat{A}(\xi)|\le |A|$ gives 
\begin{equation}
\label{eq:Bourgain}
\int_{T} |\widehat{A}(\xi t)|^{2} dt\le  |A|\int_{T} |\widehat{A}(\xi t)| dt.
\end{equation}
Applying Lemma \ref{lem:bilinear}, and the condition $|A|=\delta^{1-\sigma}$, we obtain 
$$
\int_{T} |\widehat{A}(\xi t)| dt\lessapprox |A|^{3/2}|T|^{1/2}|\xi|^{-(2\sigma-1)/2}.
$$
Thus we arrive
$$
\eqref{eq:Bourgain} \lessapprox |A|^{5/2}|T|^{1/2}|\xi|^{-(2\sigma-1)/2}.
$$
Combining with Fubini's theorem and the condition $\sigma>1/2$, we have
\begin{equation*}
\begin{split}
I_1&\lessapprox |A|^{5/2}|T|^{1/2} \int_{L\le |\xi|\le \delta^{-1}} |\widehat{A}(\xi)|^{2} |\xi|^{-(2\sigma-1)/2}d \xi \\
&\lessapprox |A|^{5/2}|T|^{1/2} L^{-(2\sigma-1)/2} \int_{L\le |\xi|\le \delta^{-1}} |\widehat{A}(\xi)|^{2} d\xi.
\end{split}
\end{equation*}
Plancherel identity~\eqref{eq:Plan} implies 
$$
\int_{L\le |\xi|\le \delta^{-1}} |\widehat{A}(\xi)|^{2} d\xi \le |A|.
$$
Thus we arrive 
\begin{equation}
\label{eq:I1}
I_1\lessapprox |A|^{7/2}|T|^{1/2} L^{-(2\sigma-1)/2}.
\end{equation}

Now we optimize the choice of the parameter $L$ to find the smallest upper bound for the parts  $I_0, I_1$. Recalling that we  ask  $0<L<1/\delta$.  In the end, the parameter $L_{0}$, which  makes the right hand sides of~\eqref{eq:I0},~\eqref{eq:I1} ``comparable", satisfies our need. Thus  we derive 
\begin{equation}
\label{eq:L}
L_0=(|A||T|)^{-1/(1+2\sigma)}.
\end{equation}
Indeed the conditions $|A|=\delta^{1-\sigma}$, $|T|\ge \delta$, and $1/2<\sigma<1$, imply that $L_0\le \delta^{-1}$. It follows that 
\begin{equation}
\label{eq:i0i1}
I_0, I_1\lessapprox |A|^4|T|L_0\lessapprox |A|^{3}|T|  (|A|^{\frac{2\sigma}{1+2\sigma}}|T|^{-\frac{1}{1+2\sigma}}).
\end{equation}

Now we turn to the estimate for $I_2$. By changing variables and applying the Plancherel identity we obtain
$$
\int_{\R} |\widehat{A}(t\xi)|^{2}dt \le |A||\xi|^{-1}.
$$
Again by applying the Plancherel identity  we have 
\begin{equation*}
\begin{split}
I_2 \le \int_{|\xi|\ge \delta^{-1}} |\widehat{A}(\xi)|^{2} |A| |\xi|^{-1}d \xi \ll |A|^{2}\delta.
\end{split}
\end{equation*}
Combining with \eqref{eq:three}, \eqref{eq:I0}, \eqref{eq:L}, \eqref{eq:i0i1}, we obtain the desired bound.
\end{proof}

Now we turn to the proof of Theorem \ref{thm:main}. Suppose that 
\begin{equation*}
\max\{|A+A|, |AA|\}=K |A|.
\end{equation*}
By Lemma \ref{lem:G} there exists a $(\delta, \sigma)$-set $T\subset [0.4, 2]$ such that 
\begin{equation}
\label{eq:T}
|T|\gtrapprox |A|/K,
\end{equation}
and  for each $t\in T$ we have 
\begin{equation}
\label{eq:AtA}
|A+tA|\lessapprox K^{3}|A|.
\end{equation}
Applying  Lemma \ref{lem:B} to $A$ and $T$, we conclude  that  there exists a $t_0\in T$ such that 
$$
E(A, t_0A)\lesssim |A|^{3}  \left (|A|^{\frac{2\sigma}{1+2\sigma}}|T|^{-\frac{1}{1+2\sigma}}+\delta(|A||T|)^{-1} \right).
$$
By \eqref{eq:energyandsumset} and estimates \eqref{eq:T}, \eqref{eq:AtA}, we obtain that 
\begin{equation*}
\begin{split}
|A|&\lessapprox \left (|A|^{\frac{2\sigma}{1+2\sigma}}|T|^{-\frac{1}{1+2\sigma}}+\delta(|A||T|)^{-1} \right)|A+t_0A|\\
& \lessapprox |A|^{\frac{2\sigma-1}{2\sigma+1}}|A|K^{\frac{6\sigma+4}{2 \sigma+1}}+|A|^{-1}\delta K^{4}.
\end{split}
\end{equation*}
It follows that 
$$
K\gtrapprox \min\{ \delta^{-\frac{(1-\sigma)(2\sigma-1)}{6\sigma+4}}, \delta^{-\frac{2\sigma-1}{4}}\}.
$$
Note that for $1/2<\sigma<1$ we have 
$$
\frac{(1-\sigma)(2\sigma-1)}{6\sigma+4} \le \frac{2\sigma-1}{4},
$$
and hence
$$
K\gtrapprox \delta^{-\frac{(1-\sigma)(2\sigma-1)}{6\sigma+4}},
$$
which gives the result.

\section*{Acknowledgement}

The author is  grateful to  Igor Shparlinski  for his
comments on the initial draft. Especially the author would like to thank the anonymous referee for carefully reading the manuscript and giving excellent comments, and thus improving the quality of this article.

This work was  supported in part  by ARC Grant~DP170100786.

\end{document}